\newcommand{\PP}{\mathbb{P}}
\newcommand{\Aut}{\textrm{Aut}}
\newcommand{\C}{\mathbb{C}}
\newcommand{\R}{\mathbb{R}}
\newcommand{\Z}{\mathbb{Z}}
\newtheorem{theorem}{Theorem}[section]
\newtheorem{proposition}[theorem]{Proposition}
\theoremstyle{definition}
\newtheorem{definition}[theorem]{Definition}
\newtheorem{example}[theorem]{Example}
\theoremstyle{remark}
\numberwithin{equation}{section}
\begin{document}

\title{Hurwitz numbers, ribbon graphs, and tropicalization}
\author{Paul Johnson}
\address{
Mathematics Department
Columbia University
Room 509, MC 4406
2990 Broadway
New York, NY 10027}
\curraddr{}
\email{paul.da.johnson@gmail.com}
\thanks{The author was supported in part by NSF Postdoctoral Fellowship DMS-0902754}


\begin{abstract}
The double Hurwitz number $H_g(\mu,\nu)$ has at least four equivalent definitions.  Most naturally, it counts the covers of the Riemann sphere by genus $g$ curves with certain specified ramification data.  This is classically equivalent to counting certain collections of permutations.  More recently, it has been shown to be equivalent to a count of certain ribbon graphs, or as a weighted count of certain labeled graphs.

This note is an expository account of the equivalences between these definitions, with a few novelties.  In particular, we give a simple combinatorial algorithm to pass directly between the permutation and ribbon graph definitions.
The two graph theoretic points of view have been used to give proofs that $H_g(\mu,\nu)$ is piecewise polynomial in the $\mu_i$ and $\nu_j$. We use our algorithm to compare these two proofs.
\end{abstract}

\maketitle

\section{Introduction}

Hurwitz theory is the study of ramified covers of curves;  Hurwitz numbers count the number of covers having specified ramification.  This paper aims to clarify the connections between four equivalent definitions of the \emph{double Hurwitz number} $H_g(\mu,\nu)$.  We name each definition with the initial letter of what it counts:
\begin{enumerate}
\item[(C)] As a count of certain ramified \emph{covers}
\item[(P)] As a count of certain sets of \emph{permutations}
\item[(RG)] As a count of certain labeled \emph{ribbon graphs}
\item[(TG)] As a weighted count of certain labeled \emph{(tropical) graphs}
\end{enumerate}
Definition (C) in terms of covers and Definition (P) in terms of permutations are classical, as is their equivalence through the monodromy of the cover.   Definitions (RG) and (TG) in terms of graphs are more modern.

Definition (RG) was first given by Goulden, Jackson and Vakil in \cite{GJV}, adapting ideas already used for single Hurwitz numbers \cite{Arnold, OPH} to double Hurwitz numbers.

Definition (TG) was introduced in \cite{CJM1}.  This definition is inspired by tropical geometry, though it can be understood without it.  We use only the cartoon summary of tropicalization: it degenerates Riemann surfaces into graphs (also known as tropical curves).  Figure \ref{fig-tropicalization} shows a double Hurwitz cover and its tropicalization.

\begin{figure}[!h]
\caption{A cartoon of tropicalization} \label{fig-tropicalization}
\includegraphics[width=5in]{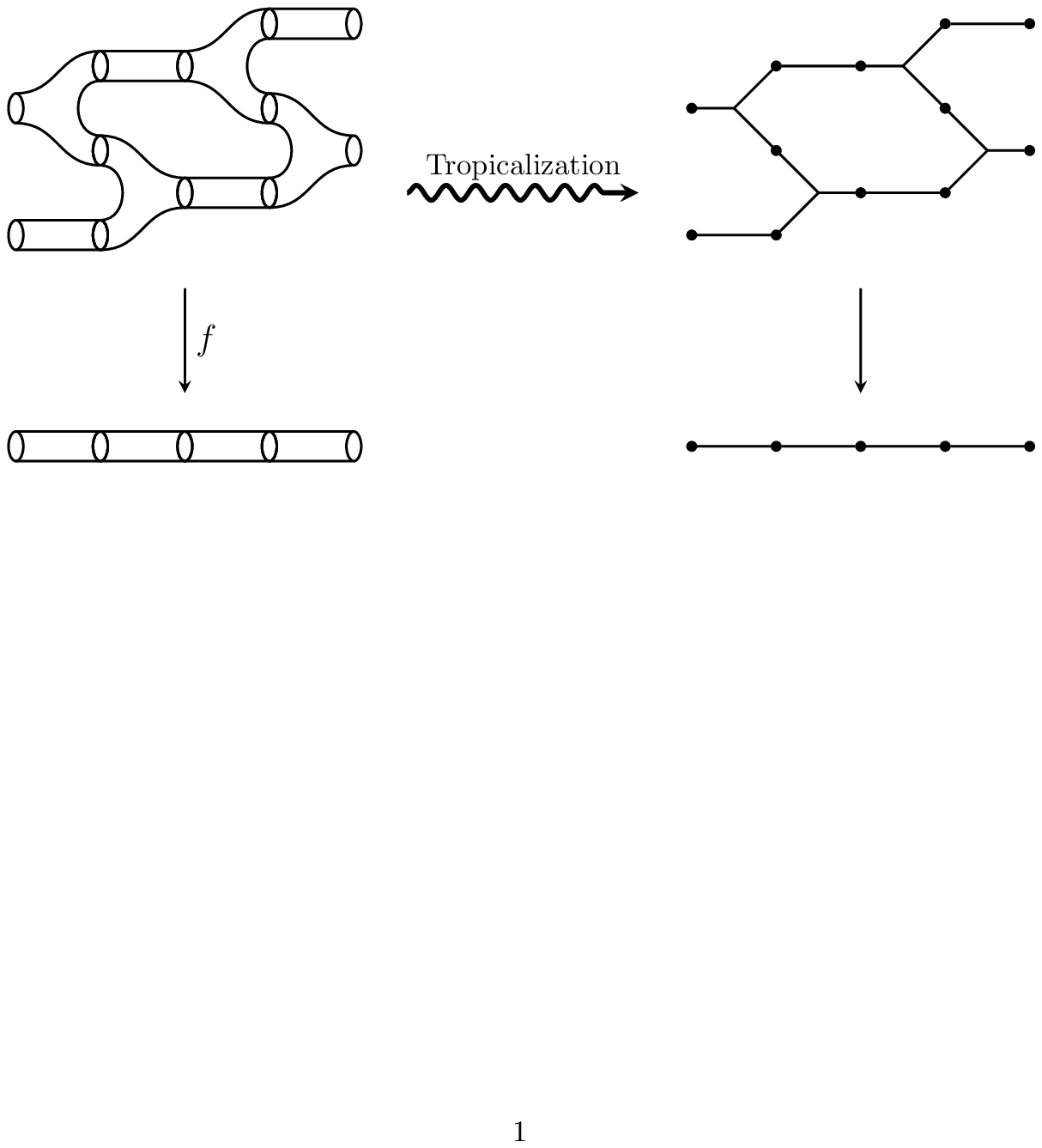}
\end{figure}

The modern definitions were introduced to help reveal structure in the double Hurwitz numbers $H_g(\mu,\nu)$.  In \cite{GJV}, Definition (RG) is used to prove that $H_g(\mu,\nu)$ is a piecewise polynomial function; Definition (TG) is used in \cite{CJM1, CJM2} to give another proof of this fact.  Both proofs use Ehrhart theory in a similar way.   There is now a third proof of piecewise polynomiality using Definition (P) in \cite{wedge}, that first uses the classical step of using representation theory, and then follows Okounkov \cite{OHur} in encoding the resulting combinatorics in terms of operators acting on the infinite wedge.  This approach is powerful, but loses all contact with geometry, and is beyond the scope of this article.

The main goal of this paper is to survey these definitions and their interconnectedness, and thus it is largely expository.  There are several novelties in the exposition: for instance, we find it conceptually useful to use Morse theory, and we use slightly different ribbon graphs than those in \cite{GJV}.  The biggest original contribution, however, is to illustrate \emph{direct} equivalences between some of the definitions.

Though all four definitions are known to be equivalent, the actual equivalences can be rather circuitous.  In \cite{GJV} Definition (RG) is shown to be equivalent to Definition (C), while in \cite{CJM1} Definition (TG) is shown to be equivalent to Definition (P).  This makes it difficult to compare the graph theoretic definitions, and hence difficult to compare the two proofs of piecewise polynomiality.  Our main new contribution is a direct combinatorial algorithm to pass from Definition (RG) to Definition (P), which leads to a direct equivalence between Definition (RG) and (TG).  We use this last equivalence to compare the two proofs of piecewise polynomiality, which was the initial motivation for this paper.

In the remainder of the introduction, we give some motivation for studying double Hurwitz numbers.  In particular, we explain why this paper is included in a volume about integrable systems.  Section \ref{sec-classical} recalls the classical Definitions (C) and (P) of Hurwitz numbers and their relationship.  Section \ref{sec-ribbon} recalls the ribbon graph definition (RG), and shows it is equivalent to the geometric definition (C), while Section \ref{sec-ribbon-permutations} contains the algorithm connecting ribbon graphs and permutations.  Finally, Section \ref{sec-tropical} introduces the tropical definition (TG), discusses its relation to the other definitions, and compares the two proofs of piecewise polynomiality.

\subsection{Motivation}

Classically, Hurwitz theory was used to show qualitative results about $\mathcal{M}_g$, the moduli space of curves.  Riemann used it in his calculation of its dimension (see \cite{GH}, page 255) and Hurwitz and others used it to show it was irreducible (see \cite{fulton}).

More recently, Hurwitz theory has been used to give quantitative information about $\overline{\mathcal{M}}_{g,n}$, the compactification of the moduli space of pointed curves.  The ELSV formula \cite{ELSV} expresses certain intersection numbers in $H^*(\overline{\mathcal{M}}_{g,n})$ to single Hurwitz numbers $H_g(\mu)$ to great effect.  Coupled with Okounkov's result \cite{OHur} connecting Hurwitz numbers with integrable hierarchies of KP type, the ELSV formula was used in \cite{OPH} to prove the Witten-Kontsevich conjecture that certain intersections on $\overline{\mathcal{M}}_{g,n}$ are governed by the KdV hierarchy.  The proof in \cite{OPH} has been streamlined and extended in \cite{KL, kazarian}.

Double Hurwitz numbers have also been used to study $H^*(\overline{\mathcal{M}}_{g,n})$, see for instance \cite{GJVF}.  However, a recent motivation for studying double Hurwitz numbers has been the conjecture of Goulden, Jackson, and Vakil \cite{GJV} that there should be an ELSV-type formula where single Hurwitz numbers are replaced with double Hurwitz numbers, and the $\overline{\mathcal{M}}_{g,n}$ is replaced with some $\overline{\textrm{Pic}}_{g,n}$ parameterizing curves together with a complex line bundle.

We do not deal further with these issues here, except to note that a major piece of motivation for the conjecture of \cite{GJV} is the piecewise polynomiality of $H_g(\mu,\nu)$, and that the graph theoretic proofs of this fact appear as if they could be shadows of the geometric structure conjectured in \cite{GJV}.

\section{The classical viewpoints: covers and permutations} \label{sec-classical}

 In this section we review the classical perspectives on the double Hurwitz number $H_g(\mu,\nu)$ in terms of ramified covers and the symmetric group, and briefly indicate how they are equivalent.   We also introduce Morse theory to the study of double Hurwitz numbers, which will prove useful later.

For the rest of this paper, $\mu$ and $\nu$ are partitions of $d$.  The partitions $\mu$ and $\nu$ have lengths $\ell(\mu)=m$ and $\ell(\nu)=n$ -- that is, $\mu_1+\cdots+\mu_m=d$, where the $\mu_i$ are positive integers.  Let $r=2g-2+m+n$.

\subsection{Definition of double Hurwitz numbers in terms of covers}

\begin{definition}[Covers]
A $(\mu,\nu,g)$-\emph{Hurwitz cover} is a degree $d$ map $f:\Sigma\to \PP^1$ from a genus $g$ connected complex curve $\Sigma$ to $\PP^1$, satisfying
\begin{enumerate}
\item  $f$ has ramification profile $\mu$ over $0$ and $\nu$ over $\infty$
\item  $f$ has simple ramification over $r$ additional fixed points $p_i \in \PP^1$
\item  $f$ has no other ramification
\item The $m$ elements of $f^{-1}(0)$ and the $n$ elements of $f^{-1}(\infty)$ are labeled
\end{enumerate}

If $f:\Sigma\to\PP^1$ and $g:\Sigma^\prime\to\PP^1$ are Hurwitz covers, then an isomorphism $h:f\to g$ is an isomorphism $h:\Sigma\to\Sigma^\prime$ that satisfies $hg=f$ and preserves the labels of the marked points over $0$ and $\infty$.
 \end{definition}

\begin{definition}
The double Hurwitz number $H_g(\mu,\nu)$ is the count of $(\mu,\nu,g)$-Hurwitz covers, where each cover $f$ is counted with weight $1/|(\Aut(f)|$.
\end{definition}

A few brief comments are in order.  The Riemann-Hurwitz formula gives the formula $r=2g-2+m+n$; it is a special feature of double Hurwitz numbers that $r$ is independent of $d$.   Labeling the preimages of $0$ and $\infty$ is a convention used in \cite{GJV}.  Using this convention only changes the result by a factor of $|\Aut(\mu)|\cdot |\Aut(\nu)|$.

\subsection{Definition of double Hurwitz numbers in terms of permutations}
We now give the definition in terms of permutations.

First, we define a \emph{labeled permutation}.  Suppose the cycle decomposition of $\sigma$ has $k$ cycles.  Then a labeling of $\sigma$ is a bijection between the cycles and the set $\{1,\dots,k\}$.  Thus, we can talk about the $i$th cycle of a labeled permutation.

\begin{definition}[Permutations]
A \emph{$(\mu,\nu,g)$-monodromy set} is an element $$(\sigma_0,\tau_1,\dots,\tau_r,\sigma_\infty)\in S_d^{r+2},$$
together with a labeling of $\sigma_0$ and $\sigma_\infty$, satisfying:
\begin{enumerate}
\item $\sigma_0$ and $\sigma_\infty$ have cycle types $\mu$ and $\nu$ respectively
\item The $\tau_i$ are all transpositions
\item $\sigma_0\cdot \left(\prod_{i=1}^r \tau_i\right)\cdot \sigma_\infty=1$
\item The group generated by the $\tau_i$ and $\sigma_j$ acts transitively on $\{1,\dots, d\}$
\end{enumerate} 
\end{definition}

\begin{proposition} \label{prop-classical}
$H_g(\mu,\nu)$ is $1/d!$ times the number of $(\mu,\nu,g)$-monodromy sets.
\end{proposition}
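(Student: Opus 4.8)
The plan is to establish a bijection between isomorphism classes of Hurwitz covers (counted with automorphism weights) and orbits of monodromy sets under the natural $S_d$-action, and then to convert the weighted count of covers into the unweighted count $\tfrac{1}{d!}$ times the number of monodromy sets. The key construction is the classical monodromy representation of the fundamental group of the punctured sphere. First I would fix a base point $q \in \PP^1$ away from $0$, $\infty$, and the branch points $p_1,\dots,p_r$, and choose loops $\gamma_0, \gamma_1, \dots, \gamma_r, \gamma_\infty$ based at $q$ encircling $0$, $p_1, \dots, p_r$, $\infty$ respectively, ordered so that their product is trivial in $\pi_1(\PP^1 \setminus \{0,p_1,\dots,p_r,\infty\}, q)$. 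Given a Hurwitz cover $f : \Sigma \to \PP^1$, restricting to the complement of the branch locus gives an honest covering space of degree $d$; after choosing a bijection of the fiber $f^{-1}(q)$ with $\{1,\dots,d\}$, the monodromy around each loop is a permutation, yielding $\sigma_0, \tau_1, \dots, \tau_r, \sigma_\infty \in S_d$. One then checks the four defining conditions of a monodromy set: condition (3) is immediate from the relation in $\pi_1$; connectedness of $\Sigma$ forces transitivity, giving (4); the local form of $f$ near a point of $f^{-1}(0)$ with ramification index $\mu_i$ shows the monodromy $\sigma_0$ has cycle type $\mu$, and similarly for $\sigma_\infty$ and $\nu$, giving (1); and simple ramification over each $p_i$ means $\tau_i$ is a transposition, giving (2). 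The labels on $f^{-1}(0)$ and $f^{-1}(\infty)$ transport, via the bijection, to labels on the cycles of $\sigma_0$ and $\sigma_\infty$.

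Conversely, the Riemann existence theorem (together with Riemann's extension theorem to fill in the punctures) reconstructs, from any monodromy set, a ramified cover of $\PP^1$ with the prescribed ramification, connected because of transitivity, and of genus $g$ by the Riemann–Hurwitz formula applied with $r = 2g-2+m+n$. Thus the two constructions are mutually inverse up to the choices made. The only choice is the identification $f^{-1}(q) \cong \{1,\dots,d\}$; changing it by $\rho \in S_d$ conjugates the whole tuple $(\sigma_0,\tau_1,\dots,\tau_r,\sigma_\infty)$ simultaneously by $\rho$ (and correspondingly relabels nothing, since cycle labels are intrinsic to the cycles). Hence isomorphism classes of Hurwitz covers correspond bijectively to $S_d$-conjugacy orbits of monodromy sets, and — this is the crucial point — the stabilizer in $S_d$ of a monodromy set is precisely $\Aut(f)$ for the corresponding cover, since an automorphism of the cover is exactly a fiber-permutation commuting with all the monodromies.

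With the orbit–stabilizer correspondence in hand, the counting identity falls out: by the orbit-counting principle, for each isomorphism class of cover $f$ the corresponding orbit has size $d!/|\Aut(f)|$, so
\[
\#\{(\mu,\nu,g)\text{-monodromy sets}\} \;=\; \sum_{[f]} \frac{d!}{|\Aut(f)|} \;=\; d! \sum_{[f]} \frac{1}{|\Aut(f)|} \;=\; d!\cdot H_g(\mu,\nu),
\]
where the sum is over isomorphism classes of $(\mu,\nu,g)$-Hurwitz covers. Dividing by $d!$ gives the proposition.

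I expect the main obstacle — or at least the step requiring the most care — to be the precise bookkeeping of the labels and of the automorphism group: one must verify that a cover automorphism necessarily fixes every label (because it must commute with the monodromy, hence permute the cycles of $\sigma_0$ and $\sigma_\infty$ among themselves, but the labeling is preserved by hypothesis), and correspondingly that conjugation by $\rho \in S_d$ acts freely enough on labeled tuples that the stabilizer is exactly $\Aut(f)$ and nothing larger. The analytic input (Riemann existence theorem, removable singularities, Riemann–Hurwitz) is entirely standard and I would simply cite it rather than reprove it.
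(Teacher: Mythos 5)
Your proof is correct and is the standard classical argument (monodromy representation, Riemann existence theorem, and orbit--stabilizer to account for the $1/d!$ and the automorphism weights), which is exactly the approach the paper indicates, including the identification of the stabilizer of a labeled tuple with $\Aut(f)$. The only cosmetic difference is that the paper's sketch extracts the partial products $\sigma_k=\tau_k\cdots\tau_1\sigma_0$ from concentric circles rather than reading off the $\tau_i$ from small loops around the branch points, but these encode the same data.
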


Proposition \ref{prop-classical} is classical; we give some indication of the proof in Section \ref{sec-covertopermutations}.  

Dividing by $d!$ essentially comes from the fact that the $d$ sheets of the cover are not labeled, but to produce monodromy data we need a labeling of the set.  Relabeling the sheets corresponds to having $S_d$ act by simultaneous conjugation of all $r+2$ coordinates on $S_d^{r+2}$, where we label the first and last permutations as above.  Then $H_g(\mu,\nu)$ counts the number of $S_d$ orbits of labeled $S_d^{r+2}$-tuples, where each orbit $\mathcal{O}$ is counted with weight $\frac{1}{|G_\mathcal{O}|}$, where $G_\mathcal{O}$ is the stabilizer group of an orbit.  This viewpoint can create a slightly stronger version of Proposition \ref{prop-classical}.  First, define:

\begin{definition}
If $(\sigma^\prime_0,\tau^\prime_1,\dots,\tau^\prime_r,\sigma^\prime_\infty)$ is another $(\mu,\nu,g)$-monodromy set, an isomorphism between them is an element $g\in S_d$ such that $g^{-1}\sigma_i g=\sigma_i^\prime$ for $i=0$ or $\infty$, $g^{-1}\tau_j g=\tau_j^\prime$ for $j\in \{1,\dots, r\}$, and such that this conjugation action preserves the labels on $\sigma_0$ and $\sigma_\infty$.
\end{definition}

Then the proof of Proposition \ref{prop-classical} actually creates a bijection between isomorphism classes of Hurwitz covers and isomorphism classes of monodromy data.  Furthermore, for each isomorphism class of objects, it creates an isomorphism of the corresponding automorphism groups.  Put another way.  it creates an equivalence of categories between the groupoid of Hurwitz covers and the groupoid of monodromy data.

In much of what follows, rather than working with the $S_n$ orbit of the tuple $(\sigma_0,\tau_1,\tau_2,\dots,\tau_r)$, we find it simpler to define permutations $\sigma_i$ for $1\leq i \leq r$ by $\sigma_i=\tau_i\tau_{i-1}\cdots\tau_1\sigma_0$.  Clearly the tuple $(\sigma_0, \sigma_1,\dots, \sigma_r)$ determines the tuple $(\sigma_0,\tau_1,\dots,\tau_r, \sigma_\infty)$ and vice-versa in a way that commutes with conjugation by $S_d$.

\subsection{Equivalence between covers and permutations} \label{sec-covertopermutations}

The equivalence of these two definitions is classical and can be found in many places.   We do not present this in full, but we give a short and incomplete review of how to go from a geometric cover to a set of permutations, largely because we take a slightly unusual perspective that we will find useful later.   Rather than produce the transpositions $\tau_i$, we produce the set of permutations $(\sigma_0,\dots,\sigma_r)$ defined above.

Let $\Sigma^\circ=f^{-1}(\C^*)$; alternatively, $\Sigma^\circ$ is $\Sigma$ minus the $m+n$ marked points that are the preimages of $0$ and $\infty$. Let $f_\circ$ denote $f$ restricted to $\Sigma^\circ$. Fix the $r$ points of simple ramification so that they map to the points $1,2,\dots,r\in\C^*$.  Consider the set of negative real numbers $\R_-\subset\C^{*}$.  Since $\R_-$ misses all the critical values of $f_\circ$ and $f_\circ$ has degree $d$, we see that $f_\circ^{-1}$  has $d$ components, each isomorphic to the real line.  Arbitrarily choose a labeling of them with the numbers $1,\dots,d$.

Consider now the $r+1$ circles $Y_k\subset\C^*, k\in\{0,\dots,r\}$, given by $$Y_k=\left\{z\in\C^*\big ||z|=k+1/2\right\}.$$  Give $Y_k$ the orientation as the boundary of the disk containing zero. Let $X_k=f^{-1}(Y_k)$.  Since the $Y_k$ miss the critical values of $f$, we have that each $X_k$ is the union of some number of circles.  Orient $X_k$ by lifting the orientation of $Y_k$.   Each component of $X_k$ corresponds to a cycle of the permutation $\sigma_k$, as we now describe.

Observe that $X_k$ intersects each of the $d$ lines in $f^{-1}_\circ(\R_-)$ transversely; label the $d$ points of intersection in $X_k\cap f^{-1}_\circ(\R_-)$ according to which component of $f^{-1}_\circ(\R_-)$ it belongs to.  Then the elements on a component of $X_k$ form a cycle of $\sigma_k$, with the cyclic ordering given by the orientation of $X_k$.  In other words, to find how $\sigma_k$ acts on $ i$, find the label $i$ on $X_k$, and then follow along $X_k$ with its natural orientation until we find the next labeled point of orientation -- say it's labeled $j$.   Then we $\sigma_k\cdot i=j$.

Much of this discussion is clearly visible in Figure \ref{fig-tropicalization}.  The left hand side shows the map $f_\circ$.  The circles visible on $\C^*$ are the circles $Y_k$, and the circles visible on $\Sigma^\circ$ are the components of the $X_k$.

\subsection{Cut-Join and Morse theory} \label{sec-cutjoinmorse}

The above description would work, essentially unchanged, for completely arbitrary ramification.  One thing that is special about having simple ramification is that $g=|f_\circ|:\Sigma^\circ\to \R_+$, is a Morse function, and the critical values have Morse index 1.  Thus, when we pass a critical point, the manifolds $g^{-1}_\circ(0,x)$ change by attaching a 1-cell. 

We now connect the Morse-theory viewpoint to Cut-Join analysis, which we will find useful in the discussion of the tropical definition of $H_g(\mu,\nu)$ in Section \ref{sec-tropdef}.  We begin by recalling Cut-Join in the group theoretic context, and then use Morse-theory to explain the geometric meaning.

In terms of permutations, Cut-Join analysis studies how the cycle type of a permutation $\sigma$ changes when we multiply a transposition $\tau$.  Suppose that $\tau=(ij)$.  If $i$ and $j$ belong to the same cycle of $\sigma$, then that cycle is cut into two different cycles in $\tau\sigma$.  For example, take $(ij)=(13)$, and $\sigma=(123456)$; then we have $(13)(123456)=(12)(3456)$, and the one cycle of $\sigma$ has been cut in two.   If, however, $i$ and $j$ belong to two different cycles of $\sigma$, then those two cycles are joined into one cycle in $\sigma\tau$. To illustrate, take $\tau=(13)$ again, but now let $\sigma=(12)(3456)$.  We have $\tau\sigma=(123456)$, and the two cycles of $\sigma$ have been joined together.

Furthermore, if we know the lengths of the cycles that are being cut or joined, we can count the number of possibilities for $\tau$.  There are always $k\ell$ transpositions that join a $k$ cycle and an $\ell$ cycle into a $k+\ell$ cycle, there are $k+\ell$ different transpositions that split a $k+\ell$ cycle into a $k$ cycle and an $\ell$ cycle when $k\neq \ell$, and there are $k$ transpositions that split a $2k$ cycle into two cycles of length $k$.  

This can be seen as follows.   Suppose our transposition is $(ij)$, and that it joins a $k$-cycle and an $\ell$-cycle.  Then one of $i,j$, suppose $i$, must be one of the $k$ elements in the $k$-cycle, but which one is unconstrained.  Similarly, $j$ must be one of the $\ell$ elements in the other cycle, and so we have $k$ choices for $i$ and $\ell$ choices for $j$. 

In the other case, to split a $k+\ell$ cycle into a $k$-cycle and an $\ell$ cycle we can choose any of the $k+\ell$ elements in the cycle to be $i$.   Then $j$ must be whatever appears $k$ steps after $i$ in the cycle.  When $k=\ell$, each transposition counted this way appears twice.  

We now use the Morse function viewpoint to explain Cut-Join in the context of geometry.  Recall that the geometric analog of multiplying by a transposition is attaching a $1$-cell.  If the two boundary points of the 1-cell lie on the same component of $X_k$, that component is cut into two components in $X_{k+1}$; if the boundary points of the 1-cell lie on different components of $X_k$, those components are joined into one component in $X_{k+1}$.  These two cases of cut and join correspond to whether the ``waist'' of the pair of pants we are adding faces left or right.  

The multiplicity with which these possibilities happen can also be seen geometrically.  If a component of $X_k$ maps to $Y_k$ with degree $d$, the procedure in Section \ref{sec-covertopermutations} constructs puts $d$ labels on that component, which divide the circle into $d$ intervals.  When we want to attach a one cell to this boundary component, we thus have $d$ different choices of places to attach it.

\section{Double Hurwitz numbers and labeled ribbon graphs} \label{sec-ribbon}

In this section we introduce certain labeled ribbon graphs called $(\mu,\nu,g)$-\emph{Hurwitz ribbon graphs}.  We call them $(\mu,\nu,g)$-HRGs for short, or just HRGs when $\mu,\nu$ and $g$ are not specified.    In Section \ref{sec-cover-ribbon} we show that counting $(\mu,\nu,g)$-HRGs gives the double Hurwitz number $H_g(\mu,\nu)$, and in Section \ref{sec-pp} we use this to show that $H_g(\mu,\nu)$ is piecewise polynomial.  Finally, in Section \ref{sec-GJVribbon} we explain the relationship between our ribbon graphs and those used in \cite{GJV}.

\subsection{Ribbon graphs}

Intuitively, a ribbon graph is a graph whose edges have been thickened to be ribbons.  There are many equivalent formal definitions of ribbon graphs.  We use the following:

\begin{definition} A \emph{ribbon graph} is a pair $\Gamma\subset \Sigma$ of a graph $\Gamma$ embedded in an oriented topological surface $\Sigma$, so that each component of $\Sigma\setminus \Gamma$ is a disk.
\end{definition}

Ribbon graphs are a very natural concept and is studied under many different names.  They are sometimes referred to as fat graphs, or simply graphs on surfaces.  Though we require the surfaces to be oriented, in general, unorientable ribbon graphs can be studied.

A ribbon graph naturally defines a cell complex on the surface $\Sigma$.  The vertices and edges of the cell complex are the vertices and edges of the graph $\Gamma$. The faces are the components of $\Sigma\setminus\Gamma$.  In what follows, we talk about the vertices, edges, and faces of a ribbon graph, and write $v\prec e, e\prec f, v\prec f$ to say that a given vertex, edge or face $v,e$ or $f$ is incident to another.

We are interested in \emph{bicolored ribbon graphs}, also known as hypermaps.  In a bicolored ribbon graph, the faces are colored gray and white so that adjacent faces do not have the same color - thus, each edge separates a gray face from a white face.   This provides an orientation on the edges, namely, travel along the edge so that the gray face is on the right and the white face is on the left.  We refer to this as the natural orientation of the edges.

\subsection{Ribbon graphs from Hurwitz covers}

In this section, given a Hurwitz cover $f:\Sigma\to \PP^1$, we construct a bicolored ribbon graph $\Gamma\subset\Sigma$ with labeled vertices and faces.  The definition of $(m,n,g)$-ribbon graphs encapsulates the resulting structure.  An $(m,n,g)$-ribbon graph does not capture all the information contained in the cover $f$ -- to do that we introduce edge weight in Section \ref{sec-weighting}.  However, $(m,n,g)$-ribbon graphs  play a role in the proof of piecewise polynomiality in Section \ref{sec-pp}.

\begin{figure}[h]
\caption{Various markings on $\PP^1$} \label{fig-markedsphere}
\includegraphics[width=3in]{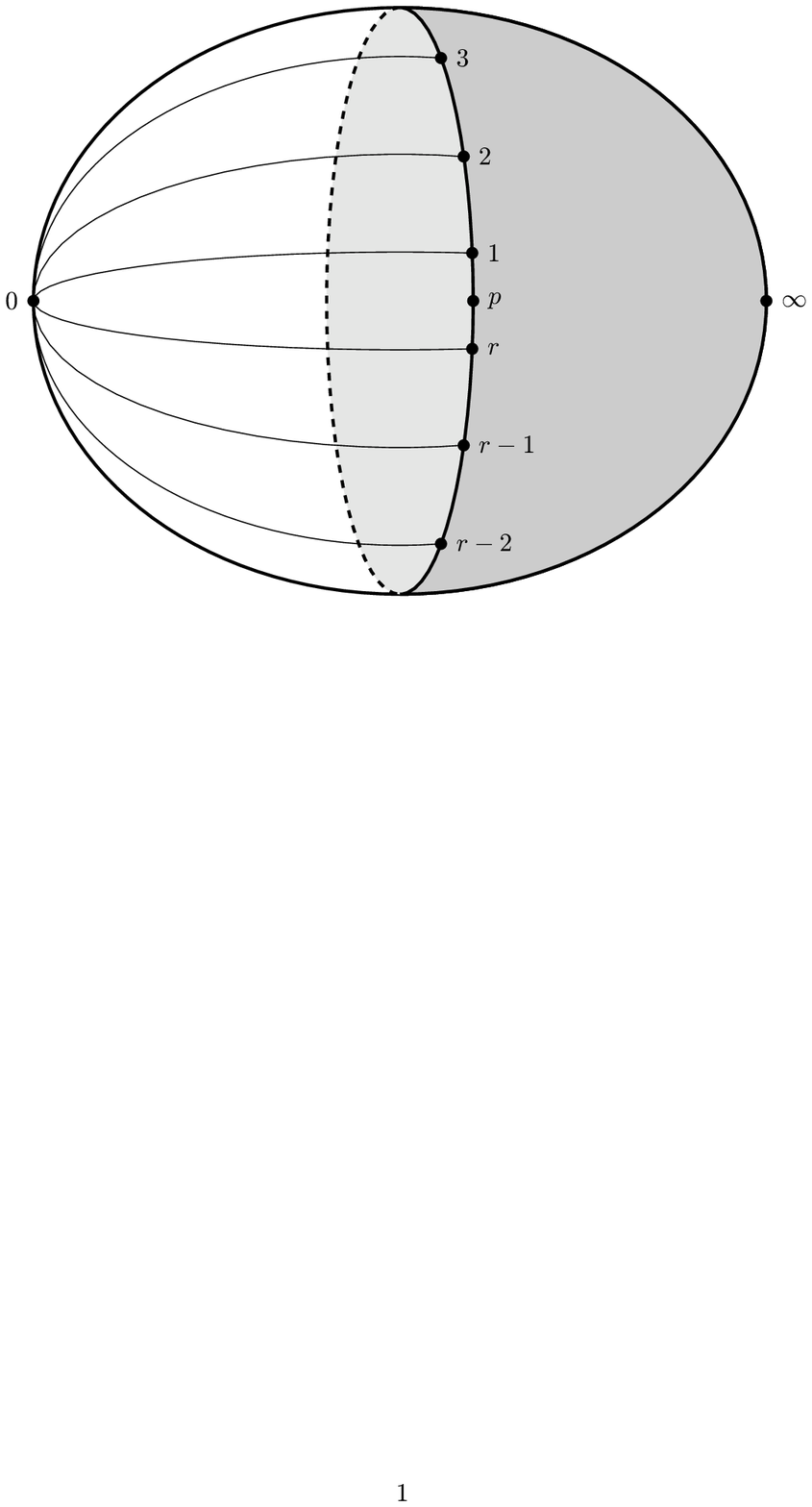}
\end{figure}

 First, suppose that $f:\Sigma\to\PP^1$ is a $(\mu, \nu, g)$ Hurwitz cover, with simple ramification at the $r$ roots of unity.  We label the $r$ roots of unity $1,\dots,r$ following the standard counterclockwise ordering as the boundary of the unit disk around 0, as in Figure \ref{fig-markedsphere}.

Let $U\subset\PP^1$ be the unit circle $|z|=1$.  The graph $\Gamma\subset\Sigma$ is the inverse image of the unit circle: $\Gamma=f^{-1}(U)$. Away from the $r$ points of ramification on $\Sigma$, $f$ is a local homeomorphism, and so away from the $r$ points of ramification $f^{-1}(U)$ is locally homeomorphic to an interval.  These intervals are the edges of $\Gamma$.
  
Now let $x\in\Sigma^\circ$ be a point of ramification.  Since around $x$ the map $f$ is equivalent to $z\mapsto z^2$, we see that locally near $x, f^{-1}(U)$ is homeomorphic to the union of the coordinate axes in $\mathbb{R}^2$.  Thus, every ramification point $x$ is a 4-valent vertex of $\Gamma$.  We label the vertices of $\Gamma$ with $\{1,\dots,r\}$ according to which root of unity it maps to.

To verify that $\Gamma\subset\Sigma$ is a bicolored ribbon graph, we must check that each component of $\Sigma\setminus\Gamma$ is a disk, and produce a coloring of the faces.  To produce the coloring, we lift a coloring on $\PP^1\setminus U$.  Let $H_0$ be the component of $\PP^1$ containing 0, and color it white, and let $H_\infty$ be the component containing $\infty$, and color it gray, as in Figure \ref{fig-markedsphere}.  Now, each component of $\Sigma\setminus\Gamma$ must map to either $H_0$ or $H_\infty$, and we color it white or gray according to whether it maps to $H_0$ or $H_\infty$.  It is immediate that this is a bicoloring.

To check that each face is a disk, observe that the only ramification of $f$ on $\Sigma\setminus\Gamma$ occurs over $0$ and $\infty$.  A map to the disc only ramified over $0$ must be a disjoint union of maps of the form $z\mapsto z^{k_i}$, for some $k_i$. Thus, each component of $\Sigma\setminus\Gamma$ must be a disk.  

We see that each face contains exactly one ramification point, and the ramification points are labeled.  Thus there must be $m$ white faces, labeled $1,\dots, m$ according to which ramification point it contains.  Similarly there are $n$ labeled gray faces.  The following definition encapsulates the structure we have defined on $\Sigma$:

\begin{definition}
A $(m,n, r)$-\emph{ribbon graph} is a 4-valent, bicolored ribbon graph with $r$ labeled vertices, $m$ labeled white faces and $n$ labeled gray faces.
\end{definition}

\subsection{Edge weights on ribbon graphs} \label{sec-weighting}

Note that the definition of a $(m,n,r)$-ribbon graph does not take into account the partitions $\mu$ and $\nu$.  Given a $(\mu,\nu,g)$ Hurwitz cover $f:\Sigma\to\PP^1$ we introduce an additional edge weighting on $\Gamma$ that allows us to reconstruct $f$.

Given a Hurwitz cover $f:\Sigma\to\PP^1$, construct a $(m,n,r)$-ribbon graph $\Gamma\subset\Sigma$ as in the previous section.  Then, on  the unit circle $U\subset \PP^1$, mark an additional point $p$ between the point marked $r$ and the point marked $1$, as in Figure \ref{fig-markedsphere}.  For each edge $e$ of $\Gamma$ define $w(e)$ be the number of points in $f^{-1}(p)$ that lie on $e$.

We now discuss two properties that any edge weighting $w(e)$ defined as above has automatically.  
  
First, as $f$ has degree $d$, we see that the sum of all the $w(e)$ must be $d$.  We can refine this as follows: since $f$ restricted to the $i$th white face has degree $\mu_i$, we have that the sum of the $w(e)$ for the edges $e$ around the $i$th white face must be $\mu_i$.  Similarly the sum of the $w(e)$ for the edges around the $j$th gray face $g_j$ must be $\nu_j$.

\begin{definition}
An edge weighting $w(e)$ on an $(m,n,g)$-ribbon graph is $(\mu,\nu)$-\emph{balanced} if for each white face $f_i$ and each gray face $g_j$ we have:
\begin{align*}
\sum_{e\prec f_i}w(e)&=\mu_i&  \sum_{e\prec g_j}w(e)&=\nu_j.
\end{align*}
\end{definition}

Second, it is clear from the definition of $w(e)$ that it is always a nonnegative integer.  However, if we look closer we can see that the weights of certain edges must be strictly positive.  Suppose that with the standard orientation, the edge $e$ goes from vertex $i$ to vertex $j$.  If $i<j$, then it is possible that $e$ has weight zero.  However, if $i\geq j$, then $e$ must have at least one preimage of $p$, and so $w(e)>0$.

\begin{definition} \label{def-positive}
An edge weighting $w(e)$ is \emph{positive} if $w(e)$ is always a nonnegative integer, and $w(e)>0$ when $e$ is an edge from $i$ to $j$ with $i\geq j$.
\end{definition}

Naming this condition ``positive'' is best explained in terms of the following definition:

\begin{definition} \label{def-length}
Given an $(m,n,r)$-ribbon graph with edge weighting $w(e)$, define the length $\ell(e)$ of an edge $e$  by:
$$\ell(e)=2\pi\left(w(e)+\frac{j-i}{r}\right).$$
\end{definition}

Definition \ref{def-length} of $\ell(e)$ is natural in the context of the ribbon graphs and weights we have constructed so far from Hurwitz covers.  Give the sphere the standard round metric, so that the unit circle has length $2\pi$.  Then, each edge of the ribbon graph $\Gamma\subset\Sigma$ inherits a length by defining $f$ to be an isometry away from the points of ramification.  This is exactly the length $\ell(e)$ we have just defined.  Definition \ref{def-positive} of positivity for $w(e)$ is equivalent under Definition \ref{def-length} to simply asking $\ell(e)>0$.

\subsection{Computing $H_g(\mu,\nu)$ with ribbon graphs} \label{sec-cover-ribbon}

We package the ribbon graph structure we have produced from a Hurwitz cover $f:\Sigma\to\PP^1$ into the following definition:

\begin{definition}[Ribbon Graphs]
A $(\mu,\nu,g)$-\emph{Hurwitz ribbon graph}, or $(\mu,\nu,g)$-HRG,  is an $(m,n,r)$-ribbon graph together with an edge weighting $w(e)$ that is positive and $(\mu,\nu)$-balanced.

An isomorphism between HRG is an isomorphism of the underlying ribbon graphs (i.e., the underlying cell complexes) that preserves the labels of the vertices and faces and the weights $w(e)$ of the edges.
\end{definition}

We now show that the definition of $H_g(\mu,\nu)$ in terms of ribbon graphs is equivalent to that in terms of covers.  This is the essentially the argument contained in \cite{GJV, OPH}.
 
\begin{proposition} \label{prop-rg}
$H_g(\mu, \nu)$ is the count of isomorphism classes of weighted ribbon graphs, where each weighted ribbon graph $\Gamma$ is counted with weight $1/|\Aut(\Gamma)|$.
\end{proposition}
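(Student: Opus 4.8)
The plan is to set up an explicit bijection between isomorphism classes of $(\mu,\nu,g)$-Hurwitz covers and isomorphism classes of $(\mu,\nu,g)$-HRGs that identifies automorphism groups, so that the weighted counts agree. The construction in one direction is already given in Sections \ref{sec-ribbon} and \ref{sec-weighting}: from a Hurwitz cover $f:\Sigma\to\PP^1$ we take $\Gamma=f^{-1}(U)$, color the faces by whether they map to $H_0$ or $H_\infty$, label vertices by the root of unity they cover and faces by the contained ramification point, and define the edge weights $w(e)$ by counting preimages of the auxiliary point $p$. The text already verifies that this produces an $(m,n,r)$-ribbon graph and that $w(e)$ is automatically positive and $(\mu,\nu)$-balanced, i.e.\ that the output is a $(\mu,\nu,g)$-HRG. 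One should also note that an isomorphism of Hurwitz covers restricts to an isomorphism of the associated HRGs, since it commutes with $f$ and hence preserves $f^{-1}(U)$, the coloring, the vertex/face labels, and the fibers over $p$; this gives a functor from the groupoid of covers to the groupoid of HRGs.

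The substance of the proof is the inverse construction: from a $(\mu,\nu,g)$-HRG $(\Gamma\subset\Sigma, w)$ we must reconstruct a Hurwitz cover. First I would use the ribbon graph to build the map on a neighborhood of $\Gamma$: the natural orientation on edges (gray face on the right) together with the $4$-valency and bicoloring lets one map a ribbon neighborhood of $\Gamma$ to a neighborhood of $U\subset\PP^1$, sending each vertex to the appropriate labeled root of unity and mapping each edge $e$ isometrically to an arc of $U$ — here the weight $w(e)$ and Definition \ref{def-length} prescribe which arc, namely an arc of length $\ell(e)=2\pi(w(e)+(j-i)/r)$, and positivity guarantees $\ell(e)>0$ so this is a genuine arc. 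The $(\mu,\nu)$-balanced condition is exactly what makes this consistent: walking around the $i$th white face, the edge arcs piece together to wrap around $U$ a total of $\mu_i$ times, so the map extends over that face as $z\mapsto z^{\mu_i}$ on a disk, giving a point of $f^{-1}(0)$ with ramification $\mu_i$; symmetrically the $j$th gray face caps off with a $z\mapsto z^{\nu_j}$ chart at $\infty$. Since every face of a ribbon graph is a disk, capping off all faces produces a closed surface $\Sigma$ with a degree-$d$ branched cover $f:\Sigma\to\PP^1$, branched as $\mu$ over $0$, as $\nu$ over $\infty$, and with a simple branch point over each root of unity (the $4$-valent vertices give local model $z\mapsto z^2$), and nowhere else. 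Connectivity of $\Sigma$ follows because $\PP^1\setminus\{0,\infty,\text{roots of unity}\}$ is connected and its preimage is; a Riemann–Hurwitz count then forces the genus to be $g$. The labels on vertices and faces supply the labeling of $f^{-1}(0)$ and $f^{-1}(\infty)$, and a complex structure on $\Sigma$ is induced by pulling back that of $\PP^1$. One then checks the two constructions are mutually inverse on isomorphism classes and that both respect isomorphisms, so the groupoids are equivalent; since equivalences of groupoids preserve automorphism groups objectwise, the weighted cardinalities $\sum 1/|\Aut|$ coincide, which is the claim together with Proposition \ref{prop-rg}.

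I expect the main obstacle to be the reconstruction of the cover near $\Gamma$ in a way that is manifestly canonical — that is, making precise how the combinatorial data (cyclic order of edges at each $4$-valent vertex, bicoloring, weights) determines the map on a ribbon neighborhood uniquely up to isomorphism, and checking that the local pictures at vertices and along edges glue into a well-defined continuous (indeed, away from branch points, unramified holomorphic) map whose monodromy around $0$, $\infty$ and each root of unity is as required. The balanced condition handling the capping-off is the linchpin and deserves a careful sentence; everything else (bicoloring giving orientations, $4$-valence giving simple branching, faces being disks giving the caps, Riemann–Hurwitz pinning down $g$) is routine once that is in place. It is also worth remarking, as the paper's perspective suggests, that this is exactly the content of the Morse-theoretic picture in Section \ref{sec-cutjoinmorse} read backwards: $\Gamma=f^{-1}(U)$ is the level set of $|f|$ at radius $1$, and the HRG records precisely how the circles $X_k$ merge and split, so reconstructing $f$ amounts to reconstructing the covers over the disk $H_0$ and over $H_\infty$ from their boundary behavior, which the balanced and positive conditions make possible.
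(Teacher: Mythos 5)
Your proposal is correct and follows essentially the same route as the paper's proof: the forward direction is the construction already given, and the inverse reconstructs $f$ on $\Gamma$ by mapping edges isometrically to arcs of $U$ using the lengths $\ell(e)$, then extends over the faces as $z\mapsto z^{\mu_i}$ (resp. $z^{\nu_j}$) covers of the two hemispheres, with the balanced condition guaranteeing the boundary degree and positivity guaranteeing well-definedness. The concluding observation that this is an equivalence of groupoids, hence preserves the weighted count $\sum 1/|\Aut|$, is exactly the remark the paper makes after the proof.
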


\begin{proof}
We have seen how to construct a $(\mu,\nu,g)$-Hurwitz Ribbon graph from a $(\mu,\nu,g)$-Hurwitz cover.  Furthermore, from the construction it is clear that an automorphism of the cover give rise to an automorphism of the resulting ribbon graph.

We now indicate how the construction can be reversed.  Suppose $\Gamma\subset\Sigma$ is a HRG; from this data we construct a Hurwitz cover $f:\Sigma\to\PP^1$.   

We first describe $f$ restricted to $\Gamma$, which maps to the unit circle $U\subset\PP^1$.  We do this by putting the standard round metric on $\PP^1$, and giving each edge of $\Gamma$ the length $\ell(e)$ described in the previous section.  Then we define $f$ by mapping vertex $i$ to the $i$th root of unity, and mapping each edge $e$ to the unit circle $U$ in the unique way that is isometric and preserves orientation.  The fact that $w(e)$ is $(\mu,\nu)$-balanced and positive guarantees this is well defined.

We now extend the map $f$ to the faces.  The balanced condition also guarantees that the length of the boundary of the $i$th white face is $2\pi\mu_i$.  Since $f$ is an isometry, we see that the boundary of the $i$th white face  maps to $U$ as a $\mu_i$-fold cover.  By the Riemann existence theorem there is a unique holomorphic extension that maps to $|z|\leq 1$ with degree $\mu_i$, ramification $\mu_i$ over $0$, and no other ramification.  The extension of $f$ to the gray faces is analogous.  It is clear that automorphisms of the HRG produce automorphisms of the cover, and that this construction is inverse to the construction of an HRG from a Hurwitz cover.
\end{proof}

As was the case with Proposition \ref{prop-classical}, the proof actually proves a slightly stronger statement, in that it constructs an equivalence of groupoids between Hurwitz covers and Hurwitz Ribbon graphs.

\subsection{Piecewise Polynomiality} \label{sec-pp}

In \cite{GJV}, Goulden, Jackson and Vakil used Proposition \ref{prop-rg} and Ehrhart theory to show that $H_g(\mu,\nu)$ is a piecewise polynomial function of the variables $\mu_i$ and $\nu_j$.  We briefly recall this proof now.

The main idea is to group the $(\mu,\nu,g)$-HRG's together by forgetting the edge weightings $w(e)$ and considering the underlying $(m, n,r)$-ribbon graph.  Working in reverse, given a $(m,n,r)$-ribbon graph $\Gamma\subset \Sigma$, consider the space of possible edge weightings $w(e)$ that would make it into a $(\mu,\nu,g)$-HRG.  This is a subset of the lattice given by the $\mathbb{Z}$-span of the space of edges.  Requiring that $w(e)$ is $(\mu,\nu)$-balanced imposes $m+n$ linear equations on the lattice points.  Requiring that $w(e)$ is positive imposes a linear equality $w(e)\geq 0$ or $w(e)>0$ for each edge $e$.  Thus, the space of possible edge weights form the lattice points in a (partially open, due to positivity) polytope.

We see that changing the values of $\mu$ and $\nu$ results in parallel translating the hyperplanes of this polytope.  Ehrhart theory implies that as we parallel translate the faces of this lattice polytope, the number of lattice points in it varies as a piecewise polynomial function.  Since $H_g(\mu,\nu)$ can be calculated as the sum of the number of lattice points in a finite number of lattice polytopes, we see that $H_g(\mu,\nu)$ must be piecewise polynomial.

With slightly more work, one can use this method to determine that these polynomials have degree $4g-3+m+n$, and that the walls of polynomiality are given by equalities of the form $$\sum_{i\in I} \mu_i=\sum_{j\in J}\nu_j$$ for some subsets $I, J$ of $[m], [n]$, respectively.

\subsection{Comparison with GJV's ribbon graphs} \label{sec-GJVribbon}

The ribbon graph description we have given above differs slightly from the one used by Goulden, Jackson and Vakil in \cite{GJV}, which we call GJV ribbon graphs. We now briefly indicate the relationship between GJV ribbon graphs and HRGs.

A GJV ribbon graph has $m$ labeled vertices, $n$ labeled faces, and $r=2g-2+m+n$ labeled edges.  As with our HRGs, they can be constructed from a Hurwitz cover by lifting a structure on the sphere, as we now describe.

Let the $r$ ramification points happen over the $r$ roots of unity.  Draw a line on $\PP^1$ joining $0$ to each of the $r$ roots of unity, as shown in Figure \ref{fig-markedsphere}.  On the cover $\Sigma$, over the $i$th root of unity there is a distinct point $p_i$ where the map $f$ has simple ramification.  Because $f$ has simple ramification at $p_i$, the line connecting the $i$th root of unity with 0 has exactly two lifts to $\Sigma$ passing through $p_i$.   Both preimages of this line terminate  at one of the $m$ preimages of zero.  The union of these two is thus an edge between two of our vertices.   One can check that the defining $\Gamma$ to be the union of these edges indeed gives a ribbon graph structure to $\Sigma$.  

Here is an alternate description of $\Gamma$.  Begin by defining $\Gamma^\prime$ to be the inverse image of the star shaped graph on $\PP^1$ used above, with edges connecting $0$ to each of the $r$ roots of unity.  We now construct $\Gamma$ by simplifying $\Gamma^\prime$.  The inverse image of each root of unity consists of $d-1$ points on $\Gamma^\prime$.  Of these, $d-2$ are univalent vertices; delete these vertices and their incident edges.  The other preimage is a two valent vertex.  Delete this vertex, and merge the two incident edges into a single edge.  The resulting ribbon graph is $\Gamma$.

The weights on GJV's ribbon graphs are associated to ``corners'' instead of edges.   A corner of a ribbon graph is as it sounds: formally, it is a point of incidence between a vertex and a face.  Note that a vertex might be incident to the same face multiple times, giving multiple corners.   The corner weightings must satisfy similar balancing and positivity conditions.

There is a natural operation on ribbon graphs known as taking the medial graph that when applied to a GJV ribbon graph gives the corresponding HRG.
To construct the medial graph, a vertex is placed at the midpoint of each edge.  Across every corner of the ribbon graph, we draw an edge connecting these vertices.  These are the edges and vertices of the medial graph.  The medial graph is thus always four valent and bicolored, with the two colors of faces corresponding to vertices and faces of the old graph.   

 Figure \ref{fig-medialgraph} shows a planar graph and its medial graph.

\begin{figure}[!h]
\caption{The medial graph of a ribbon graph}  \label{fig-medialgraph}
\includegraphics[width=4in]{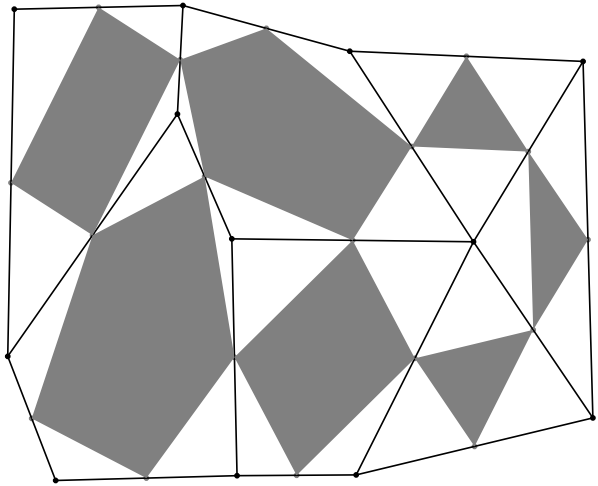}
\end{figure}

\section{From ribbon graphs to permutations} \label{sec-ribbon-permutations}
We have seen how to pass between permutations and covers, and between covers and ribbon graphs; it is clear that composition gives a way to pass between permutations and ribbon graphs.  In this section we describe a simple algorithm to pass directly from a ribbon graph to a set of permutations, bypassing the construction of the cover.

Our algorithm is entirely combinatorial, but the geometry of the cover is still be visible.   In particular, in understanding the algorithm it is useful to recall our construction of permutations from a geometric cover, and our understanding of $|f|$ as a Morse function on $\Sigma^\circ$, as in Section \ref{sec-covertopermutations} and Figure \ref{fig-tropicalization}. 
 We present the algorithm Section \ref{sec-algorithm}, and then describe the geometric meaning of the algorithm in Section \ref{sec-morsealgorithm}, which makes clear that our algorithm is the composition of the equivalences already presented.

\subsection{The algorithm} \label{sec-algorithm}

As in Section \ref{sec-covertopermutations}, rather than produce the transpositions $\tau_i$, we produce the permutations $\sigma_i=\tau_i\tau_{i-1}\cdots\tau_1\sigma_0$.

The first step of the algorithm is to place $w(e)$ tick marks on each edge $e$, and then to chose an arbitrary bijection between the resulting $\sum w(e)=d$ tick marks and some index set $I$ of size $d$.  Our permutations $\sigma$ act on $I$.  The arbitrary labelling is essentially encoding the fact that the permutations $\sigma_i$ are determined only up to simultaneous conjugation: a different choice of labeling corresponds to acting by conjugation on the tuple of permutations.

Calculating $\sigma_i$ from a ribbon graph is similar to finding $\sigma_i$ as in Section \ref{sec-covertopermutations}.  To find $\sigma_i\cdot x$, the basic idea is to find the tick mark labeled $x$, then trace along the edge following its natural orientation until we reach the next tick mark   -- $x$ maps to whatever this tick mark is labeled.  For example, in Figure \ref{fig-circles}, any $\sigma_i$  maps $a$ to $b$.

\begin{figure}[!h]
\caption{From ribbon graphs to permutations}  \label{fig-circles}
\includegraphics[width=5in]{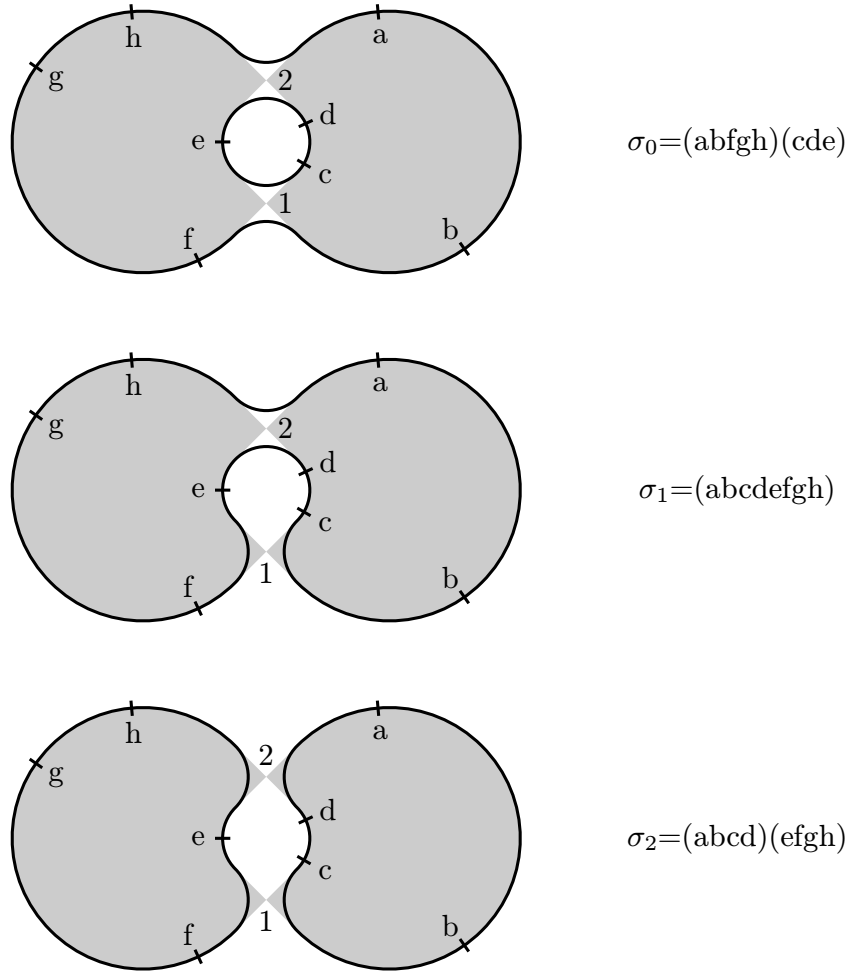}
\end{figure}

The difficulty is that this only makes sense if there is a tick mark between $x$ and the next vertex of $\Gamma$.  Otherwise, we need a rule to determine whether we turn left or right at that vertex.  In Figure \ref{fig-circles}, it is unclear what $b$ should map to -- following the edge we reach vertex 2 before we reach any tick marks.

We solve this by giving a ``traffic rule'' at each vertex, telling us to either turn left or right when we reach that vertex, and then continue along the new edge until we reach the first tick mark.  To produce the permutation $\sigma_i$, the traffic rules are the following: turn left if the vertex label is greater than $i$, and turn right if the vertex label is less than or equal to $i$.

So, for example, the first permutation $\sigma_0$ is given by turning left at every intersection, or equivalently, following the boundary of the gray faces.   The last permutation $\sigma_r$ is given by following the boundary of white cells -- turning right at every intersection.  To pass from $\sigma_{i}$ to $\sigma_{i+1}$, the only thing that changes is that the traffic rule at vertex $i+1$ changes from ``turn left'' to ``turn right''.

To further illustrate, in Figure \ref{fig-circles}, we have $\sigma_0\cdot b=f$, but $\sigma_1\cdot b=\sigma_2\cdot b=c$.

\begin{example}
An example of the algorithm is essentially contained in Figure \ref{fig-circles} - we give a brief description here.  On the left hand side of Figure \ref{fig-circles} are three copies of the same $(\mu,\nu,0)$-HRG, where $\mu=4+4$ and $\nu=5+3$.  In each copy, the partition of the edges into oriented cycles is shown in  a thicker black line.  On the right hand side, the resulting tuple of permutations is shown.
\end{example}

We briefly indicate how to reverse the algorithm, and construct a $(\mu,\nu,g)$-HRG from the series of permutations $\sigma_0,\dots,\sigma_r$.  Begin by taking a white disk for each cycle in $\sigma_0$ and labeling it with the appropriate label.  Then a add $\mu_i$ tick marks to the boundary of the $i$th disk, labeled with the elements in the $i$th cycle of $\sigma_0$ appearing in counter-clockwise order.  Finally, put a black line around the boundary of each of the white circles.

Now, to pass from $\sigma_0$ to $\sigma_1$, we know that one cycle is cut or two are joined.  This  corresponds to adding a vertex to our ribbon graph.  In case one cycle gets cut, the vertex lies on the same white circle in two different places; in case two cycles are joined it lies on two separate white circles.  After placing the vertex, change the thick black line as occurs in our algorithm.  There is a unique place to add the vertex that  changes $\sigma_0$ to $\sigma_1$.

  The algorithm for adding the vertices continues analogously for all $\sigma_r$.  When all vertices have been added, and the thick black line moved the final time, we glue a gray cell to each component of the thick black line, creating a closed surface $\Sigma$.

\subsection{Geometry of the algorithm} \label{sec-morsealgorithm}

Having a direct relationship between the ribbon graph and symmetric group points of view, we would now like to also tie in the geometric angle. The Morse theory perspective introduced in Section \ref{sec-covertopermutations} is useful here.

To begin, we note that the connection between the geometric and ribbon graph perspectives we have described is in conflict with the geometry of the Morse function picture.   In our construction of the ribbon graphs, we chose the cover $f$ so that the points $p_i$ with simple ramification all map to the unit circle, and so $|f(p_i)|=1$.  Thus, for this choice of $|f|$ is not a Morse function -- we need all the critical points to have distinct critical values.   We fix this by using the labelling of the $p_i$ to slightly deform the map $f$ we used to construct the ribbon graphs.  We keep the phase of each $f(p_i)$ the same but change the norms slightly so that $|f(p_1)|<|f(p_2)|<\cdots < |f(p_r)|$.

Recall from \ref{sec-covertopermutations} how the permutation $\sigma_i$ were visible in Figure \ref{fig-tropicalization}: the $i$th column of circles on $\Sigma$ are the cycles of $\sigma_i$.

Now assume that we know the permutation $\sigma_{i-1}$, and want to know $\sigma_i$. In terms of permutations, this corresponds to multiplying by $\tau_i$.  In the geometric picture, the cycles of the $\sigma_i$ correspond to components of the level sets $|f|^{-1}(i)$, and to understand multiplication by a transposition, we have to understand how the level sets of a Morse function change as we pass through a critical value.  We have seen that geometrically this corresponds to adding a pair of pants, and that we have a cut-join analysis corresponding to which way the pair of pants is oriented.

We would now like to connect the geometry of the Morse function point of view to the changing traffic rules of our algorithm.  Consider the local picture of the level sets of an index-1 Morse singularity, as shown in Figure \ref{fig-morselevels}.   Here, we have a saddle point drawn, with three level sets -- at, below and above the critical level -- drawn on the surface and projected beneath the surface.  The level set at the critical value gives a four valent vertex, as we have seen in our construction of the ribbon graphs.  Note that the projected level sets exactly model how the local picture around a vertex when we change a traffic rule, as in Figure \ref{fig-circles}.

\begin{figure}[h] 
\caption{Level sets of a Morse singularity} \label{fig-morselevels}
\includegraphics[width=4in]{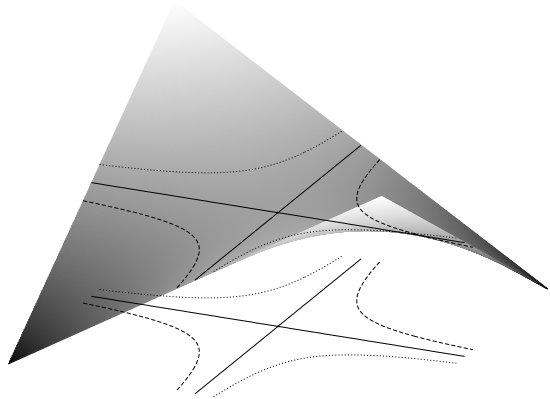}
\end{figure}

\section{The tropical definition} \label{sec-tropical}

In this section, we discuss the tropical definition of double Hurwitz numbers and its relationship to the other definitions. 

Section \ref{sec-tropdef} introduces the definition of a tropical graph, and Section \ref{sec-mondef} refines them to monodromy graphs, which are tropical graphs with certain edge weights.  The relationship between tropical graphs and monodromy graphs is analogous to the relationship between ribbon graphs and Hurwitz ribbon graphs.

The double Hurwitz number $H_g(\mu,\nu)$ is as a weighted count of monodromy graphs.  The weight a given monodromy graph is counted with includes an automorphism factor, as with the other three methods.  However, it also includes a separate factor of the product of all the interior edge weights, called the tropical multiplicity.  In Section \ref{sec-tropmult}, we explain the meaning of this tropical multiplicity in terms of the two classical viewpoints.

Finally, in Section \ref{sec-polytopemap} we relate the ribbon graph definition (RG) to the tropical definition (TG).  In particular, the tropical definition also leads to a proof of piecewise polynomiality via lattice points in polytopes.   We show that tropicalization maps the ribbon graph polytopes to the tropical polytopes by linear projections of a nice form.

\subsection{Tropical graphs} \label{sec-tropdef}

Intuitively, a $(m,n, r)$-tropical graph carries the information captured in the cartoon view of tropicalization in Figure \ref{fig-tropicalization}, where cylinders become edges, and pairs of pants become trivalent vertices. More precisely:

\begin{definition}

An $(m,n,r)$-\emph{tropical graph}, is a directed graph with $m$ univalent sources, labeled 1 to $m$, $n$ univalent sinks labeled 1 to $n$, and $r$ trivalent vertices, labeled $1$ to $r$.  The ordering of the trivalent vertices is compatible with the directions of the edges: if there is an edge from vertex $i$ to vertex $j$, then $i<j$.
\end{definition}

A tropical graph is shown on the right hand side of Figure \ref{fig-tropicalization} -- the ordering of the vertices is induced by their horizontal positioning.  From Figure \ref{fig-tropicalization}, we can also see the geometric meaning of the tropical graph: it encodes the combinatorics of the Morse function $|f_\circ|$ described in Section \ref{sec-cutjoinmorse}.  The edges represent components of the level sets of $|f_\circ|$, and the graph encodes the combinatorics of how these level sets are glued together by the 1-cells.   Our previous discussion also immediately explains the meaning of the graph in terms of permutations: the tropical graph encodes the combinatorics of how the cycles of the permutations $\sigma_i$ are split apart and joined together.

A simple calculation shows that a $(m,n,r)$-tropical graph has genus $g$, by which we mean its first homology group is $g$-dimensional.  For example, in Figure \ref{fig-tropicalization} we see a genus 1 curve tropicalizes to a graph with one loop.

\subsection{Monodromy graphs} \label{sec-mondef}

We now introduce monodromy graphs, which are tropical graphs with edge weights $w(e)$ satisfying certain properties.  

We first motivate the edge weights by explaining what information they capture in terms of the classical definitions.  First we take the viewpoint of counting covers.  Recall from our cartoon view of tropicalization that each edge $e$ of a tropical graph represents a cylinder in $\Sigma$ mapping to $\C^*$.  Each such map naturally has a degree (In the notation of Section \ref{sec-covertopermutations} the degree with which a given component of $X_k$ maps to $Y_k$), and the edge weight $w(e)$ is just this degree.   

In terms of permutations, recall that each edge represents a cycle in some permutation $\sigma_k$.  The edge weight $w(e)$ encodes the lengths of these cycles. 

From the classical viewpoints,  we see that the edge weights $w(e)$ must satisfy some obvious constraints, which we call the balancing conditions.  In terms of the geometry of the cover, the balancing conditions simply say that the degree must be preserved.    We define a $(\mu,\nu,g)$-monodromy graph to be an $(m,n,r)$-tropical graph with an edge weighting satisfying the balancing conditions.

\begin{definition}
A \emph{$(\mu,\nu,g)$-monodromy graph} is an $(m,n,r)$-tropical graph where edge $e$ has a weight $w(e)$ satisfying the following balancing conditions: \begin{itemize}
\item The edge adjacent to the $i$th source has weight $\mu_i$ 
\item The edge adjacent to the $j$th sink has weight $\nu_j$
\item For each interior vertex $v$, the sum of weights of the incoming edges equals the sum of the weights of the outgoing edges 
\end{itemize}

An isomorphism of monodromy graphs is an isomorphism of graphs that preserves vertex labels and edge weights.
\end{definition}

The edge weights can be intuitively understood as the flow of water along the directed graph.  The balancing conditions say that at that at the source and sink vertices determined amounts of water enter or leave the graph, while at the interior vertices water is conserved.

\subsection{Tropical multiplicity} \label{sec-tropmult}

It was shown in \cite{CJM1} that the Hurwitz number $H_g(\mu,\nu)$ is a weighted sum over all monodromy graphs.  However, in contrast with the previous definitions, the weight each graph is counted with is not simply an automorphism factor; there is an additional factor known as the \emph{tropical multiplicity}, which counts how many complex objects map under tropicalization to a given tropical object.  We begin by discussing tropical multiplicity in terms of the two classical definitions of Hurwitz numbers. 

Geometrically, we have seen that the tropical graph encodes the combinatorics of the level sets of the Morse function $|f_\circ|$, and the edge weights encode the degree with which each component of the level set maps to $\C^*$.  The information captured in a monodromy graph does not allow us to recreate a cover, and the failure to do so is exactly the freedom captured by Cut-Join analysis.  When we attach the boundary of a one cell to a degree $d$-edge, there are $d$ different ways to do so.  In other words, in tropicalization, forgets any twisting of the cylinders, and the tropical multiplicity records how many different ways this twisting could have happened. 

In terms of permutations, observe that the monodromy graph does not capture the full information of the orbit of $(\sigma_0,\cdots,\sigma_r)$ under simultaneous conjugation.  It does capture the cycle type of each $\sigma_i$, or, equivalently, the orbit of $(\sigma_0,\cdots,\sigma_r)$ under independent conjugation on each factor  --  that is, the orbit under the action of $(S_d)^{r+1}$, where the $i$th factor acts by conjugation on $\sigma_i$.  Actually, a monodromy graph carries slightly more information than this: it also records which cycles get split and joined together by the transpositions.  Up to this extra bookkeeping, the tropical multiplicity counts how many diagonal $S_d$ orbits a given $(S_d)^{r+1}$-orbit splits into.

Let $E^\circ(\Gamma)$ denote the set of interior edges of a monodromy graph, i.e., those not adjacent to a univalent vertex.  Then the tropical multiplicity of a monodromy graph is the product of the edge weights of all interior edges.

\begin{proposition} \label{prop-tropical}
The double Hurwitz number $H_g(\mu,\nu)$ can be calculated as a weighted sum over all monodromy graphs, where each monodromy graph $\Gamma$ is counted with weight 
$$\frac{1}{|\Aut(\Gamma)|} \prod_{e\in E^\circ(\Gamma)} w(e).$$
\end{proposition}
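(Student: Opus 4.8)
The plan is to prove Proposition \ref{prop-tropical} by showing that the tropical count coincides with the permutation count of Proposition \ref{prop-classical}, using the algorithm of Section \ref{sec-ribbon-permutations} (or, equivalently, the direct passage from covers to permutations in Section \ref{sec-covertopermutations}) as the bridge. The key observation, already anticipated in the discussion preceding the statement, is that a $(\mu,\nu,g)$-monodromy graph is exactly the data of an $(S_d)^{r+1}$-orbit of a tuple $(\sigma_0,\dots,\sigma_r)$ arising from a monodromy set, \emph{together} with the extra combinatorial bookkeeping of which cycles are cut or joined at each step. So the first step is to make this correspondence precise: given a monodromy set, record for each $i$ the cycle type of $\sigma_i$, which cycle(s) of $\sigma_{i-1}$ interact with $\tau_i$, and how; I claim this produces a $(\mu,\nu,g)$-monodromy graph $\Gamma$, with an interior edge of weight $k$ for each cycle of length $k$ in each intermediate permutation. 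The balancing conditions are automatic: the cycle-type at step $0$ is $\mu$, at step $r$ is $\nu$, and Cut-Join (recalled in Section \ref{sec-cutjoinmorse}) says that passing a transposition either splits a $k{+}\ell$ cycle into a $k$ and an $\ell$ cycle or fuses them, which is precisely conservation of flow at a trivalent vertex.

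The second and main step is the counting. Fix a monodromy graph $\Gamma$. I want to count the number of $(\mu,\nu,g)$-monodromy sets (with labels) that map to $\Gamma$, and show it equals $\frac{d!}{|\Aut(\Gamma)|}\prod_{e\in E^\circ(\Gamma)} w(e)$; dividing by $d!$ as in Proposition \ref{prop-classical} then gives the claimed weight. To count these monodromy sets I build them up one transposition at a time, following the reverse algorithm of Section \ref{sec-algorithm}. Start with $\sigma_0$: there are $d!/|\Aut(\mu)| \cdot |\Aut(\mu)| = d!$ ... more carefully, the number of labeled permutations of cycle type $\mu$ is $d!/z_\mu$ times the $|\Aut(\mu)|$ labelings, which one checks is $d!/\prod \mu_i$; and at each subsequent step, the Cut-Join multiplicity count recalled in Section \ref{sec-cutjoinmorse} tells us how many choices of $\tau_i$ realize the prescribed move. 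The crucial bookkeeping is that each time an edge of weight $k$ is \emph{created} (a cycle is born by a split, or is an incoming edge at a join), the cyclic structure on its $k$ elements is determined, but each time such an edge is later \emph{consumed} by a transposition, we get a factor counting where on that cycle the transposition attaches. Tracking these factors across the whole graph, every interior edge $e$ contributes exactly one factor of $w(e)$ (it is created once and consumed once), the source/sink edges contribute the $\prod\mu_i$ and $\prod\nu_j$ that cancel the overcounting in the initial choice of $\sigma_0$ and in the final cycle-type constraint on $\sigma_\infty$, and the residual symmetry is exactly $\Aut(\Gamma)$. Assembling, the number of labeled monodromy sets over $\Gamma$ is $d!\,\prod_{e\in E^\circ(\Gamma)} w(e)/|\Aut(\Gamma)|$.

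The third step is to assemble: summing over all monodromy graphs $\Gamma$ partitions all $(\mu,\nu,g)$-monodromy sets (every monodromy set determines a unique $\Gamma$), so $\frac{1}{d!}\#\{\text{monodromy sets}\} = \sum_\Gamma \frac{1}{|\Aut(\Gamma)|}\prod_{e\in E^\circ(\Gamma)} w(e)$, which by Proposition \ref{prop-classical} equals $H_g(\mu,\nu)$. I would also note that the geometric content — this is exactly the Morse-theoretic / tropicalization picture of Figures \ref{fig-tropicalization} and \ref{fig-morselevels} — gives an alternative, more conceptual route: the tropical graph forgets the twisting of each cylinder, and the tropical multiplicity $\prod w(e)$ is the number of ways to reinstate it, with the source/sink cylinders being twist-rigid (they close up over $0$ or $\infty$), so only interior edges contribute.

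The step I expect to be the main obstacle is the careful bookkeeping in the second step: making rigorous the claim that each interior edge contributes exactly one factor of its weight and no more, while the boundary edges' weight-factors cancel precisely against the count of initial labelings and the automorphisms collect into $\Aut(\Gamma)$. This requires being scrupulous about the distinction between labeled monodromy sets, the diagonal $S_d$-action, and the $(S_d)^{r+1}$-action, and about the "extra bookkeeping" of cut/join decorations that $\Gamma$ remembers but a bare $(S_d)^{r+1}$-orbit does not — exactly the subtlety flagged in the paragraph preceding the proposition. Handling the cases $k=\ell$ (a $2k$-cycle splitting into two $k$-cycles) correctly, where the Cut-Join count is $k$ rather than $2k$ and an automorphism of $\Gamma$ swapping the two edges may appear, is where the automorphism factor and the edge-weight product interact most delicately.
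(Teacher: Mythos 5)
Your proposal is correct and takes essentially the same approach as the paper's (brief) proof: repeated Cut-Join analysis to count the objects lying over a fixed monodromy graph, with each trivalent vertex contributing the product of the weights of its incoming edges, so that each interior edge contributes its weight exactly once and the source-edge factors $\prod_i\mu_i$ cancel against the $d!/\prod_i\mu_i$ labeled choices of $\sigma_0$; the paper phrases this geometrically (attaching $1$-cells to level sets of the Morse function) and defers the detailed permutation-side version to \cite{CJM1}. One small slip in your bookkeeping: the sink edges contribute no factor at all --- they are never consumed by a later transposition --- so there is no $\prod_j\nu_j$ to cancel at the end; only the source edges require the cancellation you describe.
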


Proposition \ref{prop-tropical} is easily verified by repeated use of the Cut-Join analysis that was reviewed in Section \ref{sec-cutjoinmorse}.  We give a brief outline from the geometric perspective.  A more detailed proof, from the permutation perspective, can be found in \cite{CJM1}.  

We need to determine how many Hurwitz covers tropicalize to a given monodromy graph.  The monodromy graph determines which components of $X_k$ the 1-cells of $|f_\circ|$ are attached to, and how many labels each circle carries.   The information that is lost is where on each circle the labels are attached, and this is exactly the information the Cut-Join analysis counts.  Each time we attach a one cell, we multiply by the degree of the edge that was cut, or the degrees of the two edges that were joined.  The only subtlety is that the labels on the original $m$ components are indistinguishable, and so we do not multiply by their weights.  This immediately yields the tropical multiplicity.

\subsection{Tropicalization as a map between polytopes} \label{sec-polytopemap}

The tropical point of view was used in \cite{CJM1,CJM2} to give another proof that $H_g(\mu,\nu)$ is piecewise polynomial.  This proof is quite similar to the one using ribbon graphs, in that they both use Ehrhart theory applied to polytopes associated to graphs.   In this section, we explain a connection between these two proofs.  We begin by briefly recall the tropical proof of piecewise polynomiality.

The basic idea is the same as that of the ribbon graph proof: for a given $(\mu,\nu,g)$ tropical graph $\Gamma$,the space of possible edge weights forms a polytope, called the \emph{flow polytope} in \cite{CJM2} based on the analogy with the flow of water.   We describe this polytope now.

Let $\mathbb{Z}[E^\circ(\Gamma)]$ denote the lattice of $\Z$-linear combinations of interior edges.  Then the space of possible edge weights $w(e)$ are given by two conditions.  First, we have the requirement that the coefficient of each edge is non-negative, which is a linear inequality.  Secondly, we have the balancing conditions, which are all linear equations in the coefficients of the edges.  Thus, the space of possible $(\mu,\nu,g)$-monodromy graphs with the same underlying $(m,n,r)$-tropical graph are the lattice points in an integral polytope, and changing the $\mu_i$ and $\nu_j$ results in parallel translation of the facets of this polytope.

In the tropical point of view, we are no longer just counting the lattice points, but counting each lattice point with its tropical multiplicity.  However, this multiplicity is a polynomial in the coordinates of the vector space, and so Ehrhart theory again tells us that this counting procedure produces a piecewise polynomial function in the $\mu_i$ and $\nu_i$.

We now use our algorithm for passing from ribbon graphs to permutations to relate the two proofs of piecewise polynomiality.   For each $(\mu,\nu,g)$-HRG tropicalization produces a monodromy graph.  Our first observation is that all HRGs with the same underlying $(m,n,r)$-ribbon graph $\Gamma$ map to monodromy graphs with the same underlying tropical graph $\Gamma^\prime$.  As the HRGs and monodromy graphs are both lattice points in a polytope, tropicalization then gives some map $\varphi$ between the ribbon graph polytope and the flow polytope.  Our second observation is that this $\varphi$ between polytopes is given by a $\Z$-linear map  
$\varphi:\Z[E(\Gamma)]\to\Z[E(\Gamma^\prime)]$.

In the algorithm for computing permutations for a ribbon graph, a cycle $C$ of the permutation $\sigma_i$ corresponds to a thick black circle.   Each circle is made up of some set $S$ of edges, and the length of the cycle is the sum of the weights of the edges in $S$.  The circles of the ribbon graph  correspond to the edges of the tropical graph.  Changing a traffic rule  cuts apart or joins together some circles, which corresponds to a vertex cutting or joining edges in the tropical graph, and so we have produced a monodromy graph from a ribbon graph.  Furthermore, if we want to know the edge weight of a given edge of the monodromy graph, it is clear it is the sum of all edge weights of the corresponding cycle on the ribbon graph, which is clearly a linear map.


\bibliographystyle{amsplain}
\bibliography{collected}

\def\cprime{$'$}
\providecommand{\bysame}{\leavevmode\hbox to3em{\hrulefill}\thinspace}
\providecommand{\MR}{\relax\ifhmode\unskip\space\fi MR }
\providecommand{\MRhref}[2]{%
  \href{http://www.ams.org/mathscinet-getitem?mr=#1}{#2}
}
\providecommand{\href}[2]{#2}
\begin{thebibliography}{10}

\bibitem{Arnold}
V.~I. Arnol{\cprime}d, \emph{Topological classification of complex
  trigonometric polynomials and the combinatorics of graphs with an identical
  number of vertices and edges}, Funktsional. Anal. i Prilozhen. \textbf{30}
  (1996), no.~1, 1--17, 96. \MR{1387484 (97d:32053)}

\bibitem{CJM1}
R.~Cavalieri, P.~Johnson, and H.~Markwig, \emph{Tropical {Hurwitz} numbers}, J.
  Algebraic Combin. \textbf{32} (2010), no.~2, 241--265.

\bibitem{CJM2}
\bysame, \emph{Chamber structure of double {H}urwitz numbers}, Adv. Math.
  \textbf{228} (2011), no.~4, 1894--1937.

\bibitem{ELSV}
T.~Ekedahl, S.~Lando, M.~Shapiro, and A.~Vainshtein, \emph{{H}urwitz numbers
  and intersections on moduli spaces of curves}, Invent. Math. \textbf{146}
  (2001), no.~2, 297--327.

\bibitem{fulton}
W.~Fulton, \emph{Hurwitz schemes and irreducibility of moduli of algebraic
  curves}, Annals of Math. \textbf{90} (1969), 542--575.

\bibitem{GJV}
I.P. Goulden, D.M. Jackson, and R.~Vakil, \emph{Towards the geometry of double
  {H}urwitz numbers}, Adv. Math. \textbf{198} (2005), no.~1, 43--92.

\bibitem{GH}
P.~Griffiths and J.~Harris, \emph{Principles of algebraic geometry}, John Wiley
  \& Sons, 2011.

\bibitem{GJVF}
D.~M.~Jackson I.~P.~Goulden and R.~Vakil, \emph{The moduli space of curves,
  double {H}urwitz numbers, and {F}aber�s intersection number conjecture},
  Annals of Combinatorics \textbf{15} (2011), no.~3, 381--436.

\bibitem{wedge}
P.~Johnson, \emph{Double hurwitz numbers via the infinite wedge},
  arXiv:1008.3266.

\bibitem{kazarian}
M.~Kazarian, \emph{{KP} hierarchy for {H}odge integrals}, Advances in
  Mathematics \textbf{221} (2009), no.~1, 1--21.

\bibitem{KL}
M.~E. Kazarian and S.~K. Lando, \emph{An algebro-geometric proof of {W}itten's
  conjecture}, J. Amer. Math. Soc. \textbf{20} (2007), no.~4, 1079--1089.
  \MR{2328716 (2008d:14055)}

\bibitem{OPH}
A.~Okounkov and R.~Pandharipande, \emph{Gromov-{W}itten theory, {H}urwitz
  numbers, and matrix models}, Algebraic geometry---{S}eattle 2005. {P}art 1,
  Proc. Sympos. Pure Math., vol.~80, Amer. Math. Soc., Providence, RI, 2009,
  pp.~325--414. \MR{2483941 (2009k:14111)}

\bibitem{OHur}
Andrei Okounkov, \emph{{T}oda equations for {H}urwitz numbers}, Math. Res.
  Lett. \textbf{7} (2000), no.~4, 447--453.

\end{thebibliography}
\end{document}